\numberwithin{equation}{section}
\newtheorem{proposition}{Proposition}[section]
\newtheorem{lemma}[proposition]{Lemma}
\newtheorem{theorem}[proposition]{Theorem}
\newtheorem{corollary}[proposition]{Corollary}
\theoremstyle{definition}
\newtheorem{remark}[proposition]{Remark}
\newtheorem{definition}[proposition]{Definition}
\newtheorem{example}[proposition]{Example}
\DeclareMathOperator{\GL}{GL}
\DeclareMathOperator{\Aut}{Aut}
\DeclareMathOperator{\SL}{SL}
\DeclareMathOperator{\Spec}{Spec}
\DeclareMathOperator{\DF}{DF}
\DeclareMathOperator{\ord}{ord}
\DeclareMathOperator{\Hilb}{Hilb}
\DeclareMathOperator{\CM}{CM}
\DeclareMathOperator{\ses}{ss}
\DeclareMathOperator{\cscK}{cscK}
\newcommand{\N}{\mathbb{N}}
\newcommand{\C}{\mathbb{C}}
\newcommand{\Z}{\mathbb{Z}}
\newcommand{\Q}{\mathbb{Q}}
\newcommand{\pr}{\mathbb{P}}
\renewcommand{\epsilon}{\varepsilon}
\newcommand{\scO}{\mathcal{O}}
\renewcommand{\L}{\mathcal{L}}
\newcommand{\X}{\mathcal{X}}
\renewcommand{\H}{\mathcal{H}}
\newcommand{\U}{\mathcal{U}}
\renewcommand{\phi}{\varphi}
\title[The cscK condition is very general]{The constant scalar curvature K\"ahler condition is very general}
\author[Ruadha\'i Dervan]{Ruadha\'i Dervan}
\address{Ruadha\'i Dervan, Warwick Mathematics Institute, Zeeman Building
University of Warwick, Coventry CV4 7AL United Kingdom}\email{ruadhai.dervan@warwick.ac.uk}
\begin{document}

\begin{abstract} Recent work of Trusiani implies that the existence of a constant scalar curvature K\"ahler metric on a smooth polarised variety with discrete automorphism group is equivalent to uniform arc K-stability. We prove that uniform arc K-stability  is essentially algebraic in flat families of polarised varieties. When the polarised varieties are further smooth and have discrete automorphism group, combining these two results implies that the constant scalar curvature K\"ahler locus is very general. 

We use this result to give examples of constant scalar curvature K\"ahler metrics whose existence only follows from the recent solution of the Yau--Tian--Donaldson conjecture. Our technique is to prove a general result stating that stability of a pair in the sense of Paul is essentially an algebraic property in families, and to employ prior work with Reboulet relating uniform arc K-stability to stability of an associated pair.

\end{abstract}

\maketitle

\section{Introduction}

Consider a smooth projective variety $X$ endowed with an ample line bundle $L$, such that the automorphism group $\Aut(X,L)$ is finite. The Yau--Tian--Donaldson conjecture states that the existence of a constant scalar curvature K\"ahler (cscK) metric in $c_1(L)$ is captured by the underlying algebraic geometry of $(X,L)$  \cite{STY, GT1,SD1}. This conjecture has recently been solved through works of Boucksom--Jonsson, Darvas--Zhang, Li and Trusiani \cite{BJ,DZ,CL,AT}, which provide various algebro-geometric stability conditions which are equivalent to the existence of a cscK metric.

Another way of viewing the algebraicity of the cscK condition, as in Donaldson \cite{SD3}, is to consider the behaviour of the cscK condition in families. We use the solution of the Yau--Tian--Donaldson conjecture due to Trusiani to prove that this behaviour is essentially algebraic. Let $(\X,\L) \to B$ be a flat family of smooth polarised varieties with finite automorphism group (so $\L$ is relatively ample). Our main result is as follows.

\begin{theorem}\label{intromainthm}The cscK locus  is very general in $B$.
\end{theorem}

That is, if we set $$B_{\cscK}= \left\{b \in B: c_1(\L_b) \textrm{ admits a cscK metric}\right\},$$ where $(\X_b,\L_b)$ is the fibre over $b\in B$, then $B_{\cscK}$ is  a countable intersection of Zariski open subsets of $B$; we allow the possibility that $B_{\cscK}$ is empty, namely the possibility that no fibre admits a cscK metric. Provided the cscK locus is nonempty, it follows in particular that $B_{\cscK}$ is dense in $B$. We also note that the cscK locus is classically known to be open in the analytic topology, though this is not sufficient to prove the conjectural statement that the cscK locus is actually Zariski open (considering for example the set  $\{1/n, n\in \N\}\cup \{0\} \subset\C$ whose complement is very general and analytically open, but not Zariski open), and also note that the restriction on the automorphism groups of the fibres is necessary \cite{GT1}. We also mention recent work of Zhang--Zhou, which proves a stronger Zariski openness result for the cscK equation on projective surfaces, provided the polarisation is sufficiently close to the anticanonical class in a numerical sense \cite{ZZ}, extending ideas of Donaldson in the K\"ahler--Einstein setting \cite{SD3}.

Theorem \ref{intromainthm} gives a new tool in the construction of cscK manifolds. For example, Arezzo--Della Vedova--Shi give a recipe for producing constant scalar curvature K\"ahler metrics on finite covers \cite{ADVS} (applicable in simple cases such as branched covers of $\pr^1\times\pr^1$), and such finite covers admit a finite group of symmetry. Typically, generic deformations of such finite covers break this symmetry and so do not fall under the results of  \cite{ADVS} (see for example Cynk--van Straten  \cite{CVS} for general results on deformations of double covers). It thus follows from Theorem \ref{intromainthm} that very general deformations of covers of $\pr^1\times\pr^1$ admit cscK metrics, which was not previously known. Our technique employs crucially the recent solution of the Yau--Tian--Donaldson conjecture, and these examples are the first new examples of cscK manifolds whose existence only follows from the resolution of this conjecture. 

We  briefly recall the history of related examples, before explaining the ideas behind Theorem \ref{intromainthm}. The first examples of cscK metrics which were produced by an instance of the Yau--Tian--Donaldson conjecture are toric surfaces \cite{wang-zhou-toric}, using the solution in this case of Donaldson \cite{donaldson:toric}. Assuming discrete automorphism group, the first K-stable examples not then known to admit a cscK metric were produced by the first author, using $\alpha$-invariant techniques \cite{RD-alpha}. Subsequently, it was proven directly that these examples admit cscK metrics by Chen--Cheng \cite{chencheng:iiiexistence}, using that the Mabuchi functional is coercive in this case \cite{RD:mabuchi}. There have been a great many examples constructed employing the solution of the Yau--Tian--Donaldson conjecture for K\"ahler--Einstein metrics on Fano manifolds, beginning with work of Ilten--S\"uss \cite{IS}. We refer to \cite{calabi} for many such examples of Fano threefolds, and for further developments.

The work of Trusiani implies that the existence of a cscK metric on $(X,L)$ is equivalent to uniform K-stability of $(X,L)$, in the sense of \cite{uniform, BHJ}. By prior work of the author and Reboulet, the existence of a cscK metric implies uniform arc K-stability (a notion essentially due to Donaldson \cite{SD2}, see also Wang \cite{XW}). Arcs, or models, provide more general degenerations of $(X,L)$ than the test configurations involved in uniform K-stability. As such, uniform arc K-stability implies uniform K-stability. Since this in turn implies the existence of a cscK metric, it follows by combining these results that the existence of a cscK metric is equivalent to uniform arc K-stability. Theorem \ref{intromainthm} is therefore a consequence of the following general result around the behaviour of uniform K-stability with respect to arcs in families. Consider a flat family $(\X,\L)$ of polarised varieties, which we do not assume are smooth.

\begin{theorem}\label{intromainthm2}The uniformly arc K-stable  locus is a countable union of countable intersections of constructible sets.
\end{theorem}

Although the statement is technical, the spirit of Theorem \ref{intromainthm2} is that the uniformly arc K-stable locus has an algebraic structure in $B$, and this is sufficient for applications. We also prove that the arc K-semistable locus is a countable intersection of constructible sets. To parse the result, recall that a locally closed subset of $B$ (in the Zariski topology) is an open subset of a closed subset, while a constructible set is a finite union of locally closed subsets.  As the cscK condition is classically open in the analytic topology under a discrete automorphism group assumption \cite{FS}, Theorem \ref{intromainthm2} implies Theorem \ref{intromainthm} by applying the work of Trusiani \cite{AT}. We emphasise that the main point in the proof of Theorem \ref{intromainthm} is to use the version of the proof of the Yau--Tian--Donaldson conjecture due to Trusiani, and our contribution is to clarify the sense in which the uniform arc K-stability condition is algebraic, through Theorem \ref{intromainthm2}. 

In contrast with Theorem \ref{intromainthm}, our proof of Theorem \ref{intromainthm2} is completely algebro-geometric. To put this result into context, we note that algebraicity results are used in giving moduli spaces algebraic structures, and Theorems \ref{intromainthm} and \ref{intromainthm2} provide results in this direction. We note that a moduli space of cscK manifolds with fixed numerical invariants is known to exist as a complex space  \cite{FS, DN}, whereas its existence as an algebraic space (or moreover as a quasi-projective scheme) remains conjectural.  

In the Fano case, the construction of a moduli space of K-polystable Fano varieties is a central result of the field proven over the last decade \cite{CX3,LXZ}, and stronger results than Theorems \ref{intromainthm} and \ref{intromainthm2} play an important role in this direction. The stronger analogue of Theorem \ref{intromainthm}, namely Zariski openness of the K\"ahler--Einstein locus in families of Fano manifolds with discrete automorphism group, is proven in \cite{SD3,YO}. Purely algebro-geometric analogues of Theorem  \ref{intromainthm2} (which are, again, stronger)  in the Fano case are due to Blum--Liu \cite[Theorem 3]{BL} and Codogni--Patakfalvi \cite[Section 4.4]{CP}, who proved that K-semistability of Fano varieties is a very general property, where we note that K-semistability of a Fano variety is equivalent to K-semistability \cite[Section 5]{BX} (see also \cite[Remark 7.9]{CX2}). These  results were later improved by Blum, Liu and Xu to the stronger statement that K-semistability and uniform  K-stability are actually Zariski open properties in families of Fano varieties \cite{BL2,BLX,CX}

To prove Theorem \ref{intromainthm2}, we employ ideas of Paul \cite{SP}, alongside further developments of the author and Reboulet \cite{DR}. These works imply that uniform arc K-stability is equivalent to stability of a sequence of pairs of points, where stability of pairs has been introduced by Paul as a generalisation of geometric invariant theory. In Section \ref{non-ample}, we prove generalisations of some of these results, which apply without any restrictions on the singularities of the varieties involved. As exploited there, the perspective we take is that stability of pairs is the analogue of stability in the sense of geometric invariant theory, for group actions on projective schemes of the form $(Y,H)$, where $H$ is not assumed ample (this being the classical setting when $H$ is ample), though this is only a special case to which Paul's theory applies. When the varieties under consideration are smooth, it is most natural to associate the pair of the Chow point and the discriminant point, as in Paul \cite{SP}; to obtain results in the singular setting, we instead use the Hilbert scheme, and write the CM line bundle as a difference of ample line bundles. Having reduced to a question around stability of pairs, the key result we prove is the following.

\begin{theorem}\label{introsecondthm} The locus of stable pairs is a countable union of constructible sets. The semistable locus is constructible. \end{theorem}

Tian has conjectured that one can form moduli spaces of stable pairs \cite[Conjecture 8.1]{GT2}. Theorem \ref{introsecondthm} thus proves one part of his conjecture, in a weakened form. Through the recent progress on the Yau--Tian--Donaldson conjecture, this would allow the construction of moduli of cscK (or uniformly K-stable) smooth polarised varieties. In this direction, we also note that it was proven in \cite{SP, DR} that uniform arc K-stability implies finiteness of the automorphism group, which is another ingredient in the construction of moduli. It would be interesting to understand the extent to which one can further develop the moduli theory surrounding stability of pairs. 

We also advertise that it would be of interest to understand the precise behaviour of uniform arc K-polystability in families, by analogy with the work of Ortu in the cscK setting, which explains this behaviour in the analytic topology \cite{AO}, where the cscK condition is not an open condition (even in the analytic topology) \cite{GT1}.

Theorem \ref{introsecondthm} is a variant of the classical result that semistability in the sense of  geometric invariant theory is a Zariski open property, which is proven through the existence of invariant nonvanishing sections involved in that setting; such an approach is unavailable in our setting.

We note that uniform K-stability is motivated by the Hilbert--Mumford criterion, involving test configurations as analogues of one-parameter subgroups, and the CM line bundle. This was the perspective of Odaka \cite{YO}, in his proof that the K\"ahler--Einstein condition is Zariski open in flat families of Fano manifolds with discrete automorphism group. The approach of Odaka is to write the CM line bundle as a difference of ample line bundles, and use constructibility properties of GIT weights originating with Mumford; equivalent results were established by Paul \cite[Theorem 1.3 (4)]{SP2} (see also Li--Wang--Xu \cite[Section 9]{LWX}). The work of Odaka and Paul invokes the convex geometry of GIT stability more prominently; our approach, by contrast, is more geometric and centres the theory of stability of pairs. It seems likely that one could adapt their ideas to uniform K-stability, which would require a somewhat different approach to incorporate the minimum norm, which does not naturally arise from a line bundle on the Hilbert scheme. It would be interesting to do so, as this would prove an approach to Theorem \ref{intromainthm} which avoids the theory of stability of pairs. Our original goal, in any case, was to prove Theorem \ref{introsecondthm}, around the theory of stability of pairs, and we view Theorem \ref{intromainthm} as an application of these ideas. 

We end by noting that the ideas of Theorem \ref{introsecondthm} should be very widely applicable, beyond the cscK problem. For example, mixed Monge--Amp\`ere equations and their stability counterparts can be interpreted through stability of pairs \cite{RR}, similarly for the condition for the $k\textsuperscript{th}$-Chern--Weil representative of the tangent bundle to be harmonic \cite{QW}, and we expect many further examples.

\subsection*{Notation} We work over the complex numbers, and do not assume varieties are irreducible. A polarisation of a projective scheme is a choice of ample $\Q$-line bundle. We denote by $\C\llparenthesis t\rrparenthesis$ the field of Laurent series and $\C\llbracket t\rrbracket$ the ring of formal power series.

\subsection*{Acknowledgements} I thank  R\'emi Reboulet for many discussions on stability of pairs, and Harold Blum, Theo Papazachariou, Sean Paul and Junsheng Zhang for helpful comments. I was funded by a Royal Society University Research Fellowship (URF$\backslash$R1$\backslash$20104) for the duration of this work.

\section{Preliminaries}
\subsection{Arc K-semistability}\label{sec:arcs-prelims}
Let $(X,L)$ be polarised variety of dimension $n$. We recall the theory of uniform arc K-stability, due to Donaldson \cite{SD2} (see also \cite[Section 2]{DR} and \cite{XW}).

\begin{definition} A \emph{model} is a collection $\pi: (\X,\L) \to \Spec\C\llbracket t\rrbracket$, where
\begin{enumerate}[(i)]
\item $\pi$ is a surjective, flat morphism from the scheme $\X$ to $\Spec\C\llbracket t\rrbracket$,
\item $\L$ is a relatively ample line bundle,
\end{enumerate}
 along with an identification $(\X_{\C\llparenthesis t\rrparenthesis},\L_{\C\llparenthesis t\rrparenthesis})\cong (X_{\C\llparenthesis t\rrparenthesis}, rL_{\C\llparenthesis t\rrparenthesis})$ (with these denoting the restriction and product respectively). We call $r$ the \emph{exponent} of the model.
\end{definition}

Models generalise test configurations, which are models induced by $\C^*$-degenerations of $(X,L)$ \cite{SD1,GT1}. We associate numerical invariants to models, in the following manner, much as in \cite{SD2}. By flatness and relative ampleness, the pushforward $\pi_*(k\L)$ is a vector bundle for $k\gg 0$, and we obtain a sequence of line bundles over $\Spec\C\llbracket t\rrbracket$ through the (polynomial) Knudsen--Mumford expansion \cite[Theorem 4]{KM} (see also \cite[Appendix 5.D]{GIT}) $$\det \pi_*(k\L) = \H_0k^{n+1} + \H_1k^{n}+\ldots,$$ for $\Q$-line bundles $\H_0$ and $\H_1$ (and where we use additive notation for tensor products). The identification $(\X_{\C\llparenthesis t\rrparenthesis},\L_{\C\llparenthesis t\rrparenthesis})\cong (X_{\C\llparenthesis t\rrparenthesis}, rL_{\llparenthesis t\rrparenthesis})$ gives a choice of trivialisations of these bundles away from $0 \in \Spec\C\llbracket t\rrbracket$, allowing us to make sense of $$b_0 = \deg \H_0 \textrm{ and } b_1=\deg\H_1,$$ by taking the orders of the resulting rational functions at the origin. Alternatively, the identification allows us to canonically produce a compactified family $\left(\bar{\X},\bar\L\right) \to \pr^1$ extending the original one and we may apply the Knudsen--Mumford expansion to the resulting family and take the (usual) degrees of the resulting $\Q$-line bundles on $\pr^1$; see \cite[Section 2.2]{DR} for more details on both approaches. Denote in addition the Hilbert polynomial of $(X,L)$ by $$h^0(X,kL) = a_0 k^n + a_1 k^{n-1}+\ldots.$$

\begin{definition}\label{SD2} 
The \emph{Donaldson--Futaki invariant} of $(\X,\L)$ is the quantity \begin{equation}\label{DF}\DF(\X,\L) = \frac{b_0a_1-b_1a_0}{a_0^2}.\end{equation}
\end{definition}

The approach taken here to the Donaldson--Futaki invariant is closely analogous to the work of Paul--Tian \cite{PT}, who consider test configurations rather than arcs. When $\X$  (hence $X$) is normal, this quantity is computed as an intersection number $$(L^n/2)\DF(\X,\L) = \frac{n}{n+1}\left(\frac{-K_X.L^{n-1}}{rL^n}\right)\L^{n+1}+\L^n.K_{\X/\Spec\C\llbracket t\rrbracket},$$ where the intersection numbers are meant in the sense of \cite[Section 2]{DR}, or may again equivalently be computed on the compactification $\left(\bar{\X},\bar\L\right)$. We define, for general $(\X,\L)$, the \emph{norm} of $(\X,\L)$ by $$\|(\X,\L)\| = \frac{\L.L^n}{(n+1)L^n} - \frac{\L^{n+1}}{(n+1)r^nL^n}\geq 0,$$ where we (by passing to a resolution of indeterminacy) may assume $\X$ admits a morphism to $X\times \Spec\C\llbracket t\rrbracket$ compatible with the identification in order to pull $L$ back to $\X$; this can also be defined by a formula analogous to Equation \eqref{DF} (see \cite[Definition 2.5]{uniform} for an analogous explanation for test configurations, which applies more generally to models).

\begin{definition}\cite{SD2,DR} We say that $(X,L)$ is 
\begin{enumerate}[(i)]
\item \emph{arc K-semistable} if for all models $(\X,\L)$ we have $\DF(\X,\L) \geq 0$;
\item \emph{uniformly arc K-stable} if there exists an $\epsilon>0$ such that for all models $(\X,\L)$ we have $\DF(\X,\L) \geq \epsilon \|(\X,\L)\|.$
\end{enumerate}
We say that $(X,L)$ is \emph{arc K-semistable at exponent} $r$ and  \emph{uniformly arc K-stable at exponent} $r$ respectively if the corresponding inequality holds for all models of exponent $r$.
\end{definition}

These imply the usual notions of K-semistability and uniform K-stability (namely, with respect to test configurations \cite{uniform,BHJ}). When $X$ is smooth, they are equivalent \cite{DR} to Tian's (analytic) notions of CM semistability and stability \cite{GT1}.  

\subsection{Stability of pairs}\label{sec:pairs} We recall the theory of stability of pairs, due to Paul \cite{SP}. Consider a linear algebraic group $G$ and a pair of $G$-representations $V,W$. We do not assume $G$ is reductive, though it will be so in applications. Fix $0\neq v\in V$ and $w\in W$.

\begin{definition}The pair $[v:w] \in \pr(V\oplus W)$ is \emph{semistable} if $$\overline{G.[v:w]}\cap \pr(0\oplus W) = \varnothing.$$\end{definition}

In the special case $W=\C$ with the trivial $G$-action and $w=1$, this definition recovers semistability of $v$ in classical geometric invariant theory. A main result of \cite{DR} is a numerical criterion for semistability of pairs, through arcs.

\begin{definition}
An \emph{arc} in $G$ is a morphism $$\rho: \Spec\C\llparenthesis t\rrparenthesis\to G,$$  namely a $\C\llparenthesis t\rrparenthesis$-point of $G$. We say that two arcs $\rho, \rho'$ are \emph{equivalent} if $\rho^{-1} \circ \rho'$ and $\rho' \circ \rho^{-1}$ are $\C{\llbracket t\rrbracket}$-points of $G$.
\end{definition}

\begin{remark}\label{rmk:arcs}If $X\hookrightarrow \pr(H^0(X,rL)) =\pr^{N_r}$, then equivalence classes of arcs in $\GL(N_r+1)$ are in bijection with models of exponent $r$ up to isomorphism \cite{SD2}\cite[Proposition 2.6]{DR}.\end{remark}

We next define numerical invariants. Choosing bases of $V$ and $W$, an arc $\rho$ induces vectors $\rho.v$ and $\rho.w$ with entries in $\C\llparenthesis t\rrparenthesis$, and we may define the order $\ord_0 \rho.v$ as the minimum of the orders of pole of the not-identically-zero entries of $\rho.v$, and likewise for $\rho.w$. It is straightforward to see these are independent of choice of bases.

\begin{definition}\cite{DR}
We define the \emph{weight} of $\rho$ to be $$\mu(\rho,[v:w]) = \ord_0\rho.w - \ord_0\rho.v,$$ and say that $[v:w]$ is \emph{numerically semistable} if $\mu(\rho,[v:w])\geq 0$ for all arcs $\rho$ in $G$.
\end{definition}

This again generalises the usual weight of a one-parameter subgroup in geometric invariant theory, where a one-parameter subgroup induces an arc in the obvious manner. We will make use of the following.

\begin{theorem}\label{numerical}\cite[Theorem 3.10]{DR}
A pair $[v:w]$ is semistable if and only if it is numerically semistable.
\end{theorem}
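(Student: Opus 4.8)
The plan is to establish this as a generalised Hilbert--Mumford criterion, detecting the intersection $\overline{G.[v:w]}\cap\pr(0\oplus W)$ by means of a single arc. Write $\psi\colon G\to\pr(V\oplus W)$ for the orbit map $g\mapsto g.[v:w]$, and recall that $\ord_0$ of a vector is the minimum of the orders of its nonzero entries. The elementary observation driving both directions is this: for an arc $\rho$, the limit $\lim_{t\to 0}\rho.[v:w]$, taken in the proper scheme $\pr(V\oplus W)$, lies in $\pr(0\oplus W)$ if and only if $\ord_0\rho.w<\ord_0\rho.v$, that is, exactly when $\mu(\rho,[v:w])<0$. Indeed, clearing the common pole by the factor $t^{-m}$ with $m=\min(\ord_0\rho.v,\ord_0\rho.w)$ yields a representative with entries in $\C\llbracket t\rrbracket$, at least one a unit, whose value at $t=0$ represents the limit; this value has vanishing $V$-component and nonzero $W$-component precisely when $m=\ord_0\rho.w<\ord_0\rho.v$.

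For the implication that semistability forces numerical semistability I argue by contraposition. Given an arc $\rho$ with $\mu(\rho,[v:w])<0$, the composite $\psi\circ\rho\colon\Spec\C\llparenthesis t\rrparenthesis\to\pr(V\oplus W)$ extends, by the valuative criterion of properness, to $\Spec\C\llbracket t\rrbracket\to\pr(V\oplus W)$ whose generic point lands in the orbit $G.[v:w]$ and whose closed point is the limit above. By the observation this limit lies in $\pr(0\oplus W)$, witnessing $\overline{G.[v:w]}\cap\pr(0\oplus W)\neq\varnothing$ and contradicting semistability. This direction uses nothing about $G$ beyond its being a linear algebraic group.

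The substantial direction is the converse, again by contraposition: assuming $\overline{G.[v:w]}\cap\pr(0\oplus W)\neq\varnothing$, I must manufacture an arc of negative weight. Fix $[0:w']$ in this intersection with $w'\neq 0$ (after translating by an element of $G$, which merely replaces $w'$ by $g.w'\in W$, we may take the relevant irreducible component of $\overline{G.[v:w]}$ to carry a dense $G$-orbit). Since $\psi(G)=G.[v:w]$ is constructible and dense in this component $Z$, there is an irreducible curve in $Z$ through $[0:w']$ meeting $\psi(G)$ at its generic point; normalising and applying the valuative criterion at a point over $[0:w']$ produces a $\C\llparenthesis t\rrparenthesis$-point of the orbit whose limit at $t=0$ is $[0:w']$. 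It remains to lift this through $\psi$ to an arc in $G$. Writing $H=\Stab([v:w])$, the map $G\to G.[v:w]\cong G/H$ is an $H$-torsor, so the fibre over our $\C\llparenthesis t\rrparenthesis$-point is an $H$-torsor over $\Spec\C\llparenthesis t\rrparenthesis$; in characteristic zero $H$ is smooth, so this fibre is a nonempty smooth scheme and hence has a point over some finite extension, necessarily of the form $\C\llparenthesis t^{1/m}\rrparenthesis$ as $\C$ is algebraically closed. After the ramified base change $t\mapsto t^{1/m}$ we obtain an arc $\rho$ in $G$ with $\lim_{t\to 0}\rho.[v:w]=[0:w']$, hence $\mu(\rho,[v:w])<0$ by the first paragraph; the substitution multiplies every order, and so $\mu$, by the positive integer $m$, preserving the sign. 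Thus $[v:w]$ is not numerically semistable.

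I expect the lifting step---trivialising the stabiliser torsor over $\C\llparenthesis t\rrparenthesis$ after a ramified cover---to be the main obstacle, as it is where the characteristic-zero hypothesis and the algebraic closedness of $\C$ genuinely enter (guaranteeing smoothness of $H$ and that finite extensions are exactly the Puiseux fields $\C\llparenthesis t^{1/m}\rrparenthesis$), and one must verify that the ramified base change interacts correctly with the weight, which it does since it rescales all orders by the common positive factor $m$.
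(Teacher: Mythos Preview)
The paper does not prove this theorem at all: it is stated with the citation \cite[Theorem 3.10]{DR} and then used as a black box (in the proof of Corollary~\ref{fixed-exponent}). There is therefore no ``paper's own proof'' against which to compare your attempt.

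That said, your argument is a reasonable self-contained account of an arc Hilbert--Mumford criterion. The easy direction (an arc of negative weight produces a limit in $\pr(0\oplus W)$, hence witnesses instability) is exactly as you say. For the hard direction you follow the standard template---produce a curve in the orbit closure through the bad point, normalise to get a $\C\llparenthesis t\rrparenthesis$-point of the orbit, then lift to $G$---and you correctly identify the lifting through the stabiliser torsor as the crux. Your appeal to smoothness of $H$ in characteristic zero and to the Puiseux description of the algebraic closure of $\C\llparenthesis t\rrparenthesis$ is the right mechanism; the observation that the ramified base change rescales $\mu$ by the positive integer $m$ is what makes the sign survive. One small point to tighten: when you assert the existence of an irreducible curve in $Z$ through $[0:w']$ whose generic point lies in the orbit, you are implicitly using that the orbit is constructible and dense in $Z$, so a generic curve through $[0:w']$ (or a curve obtained by slicing with generic hyperplanes) is not contained in the boundary; this is routine but worth a sentence. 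With that caveat, the outline is sound, though it is not something the present paper undertakes.
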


This result is optimal in general: Paul--Sun--Zhang show that one cannot detect semistability of pairs exclusively through one-parameter subgroups (their examples take $G=\SL(2)$) \cite{PSZ}, so one must incorporate arcs. 

Paul has also introduced a notion of a stable pair \cite{SP}, which we next recall. We fix an embedding $G\subset \GL(m)\subset \C^{m\times m}$ for some fixed $m$, and denote by $e \in \C^{m^2}$ the element corresponding to the identity matrix. Denote by $\deg V$ the smallest positive integer $d$ such that the weight polytope of  all nonzero $v \in V$ lies inside the rescaled unit simplex $d\Delta_{\dim V}$ \cite[(2.6)]{SP}.  For each $l>0$, we thus obtain a new pair $$\left[e^{\otimes \deg V} \otimes v^{\otimes l}: w^{\otimes l+1}\right]\in \pr\left(\left(\C^{m\otimes m}\right)^{\otimes d} \otimes V^{\otimes l} \oplus W^{\otimes l+1}\right).$$ Relatedly, we define the \emph{norm} of $\rho$ to be $$\|(\rho, [v])\| = \mu\left(\rho, \left[v:e^{\otimes \deg V}\right]\right) \geq 0,$$  where non-negativity can be obtained from \cite[Lemma 3.2]{GT3}.

\begin{definition}\label{numerical-stab-def}We define $[v:w]$ to be \emph{stable} if there is a $l>0$ such that $\big[e^{\otimes \deg V} \otimes v^{\otimes l}: w^{\otimes l+1}\big]$ is semistable, and say that $[v:w]$ is \emph{numerically stable} if there exists a $l>0$ such that for all arcs $\rho$ we have $$\mu(\rho, [v:w]) \geq (l+1)^{-1}\|(\rho, [v])\|.$$ \end{definition}

As with semistability, the two conditions agree.

\begin{theorem}\label{numerical-stable}\cite[Theorem 3.22]{DR}
A pair $[v:w]$ is stable if and only if it is numerically stable.
\end{theorem}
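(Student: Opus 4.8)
The plan is to deduce Theorem \ref{numerical-stable} from the semistability case, Theorem \ref{numerical}, by observing that, for each fixed $l>0$, both ``$\big[e^{\otimes \deg V}\otimes v^{\otimes l}:w^{\otimes l+1}\big]$ is semistable'' and ``the numerical stability inequality holds at level $l$'' are, respectively, semistability and numerical semistability of the \emph{single} pair
\[
P_l:=\Big[e^{\otimes \deg V}\otimes v^{\otimes l}:w^{\otimes l+1}\Big]\in\pr\!\left(\big(\C^{m\times m}\big)^{\otimes \deg V}\otimes V^{\otimes l}\oplus W^{\otimes l+1}\right),
\]
with $G$ acting diagonally. Theorem \ref{numerical} applied to $P_l$ then gives the equivalence for each $l$, and taking the existential over $l>0$ yields the statement.

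The first step is to record additivity of the order functional under tensor products: if $G$ acts diagonally on $A\otimes B$ in the bases induced from fixed bases of $A$ and $B$, then for nonzero $a\in A$, $b\in B$ and any arc $\rho$ one has $\ord_0\rho.(a\otimes b)=\ord_0\rho.a+\ord_0\rho.b$. This is immediate on unwinding definitions: the entries of $\rho.(a\otimes b)$ are the products $(\rho.a)_i(\rho.b)_j$, the $t$-adic valuation is additive on products in the domain $\C\llparenthesis t\rrparenthesis$ (so a product of not-identically-zero entries is not identically zero and there is no cancellation within an entry), and $\min_{i,j}(x_i+y_j)=\min_i x_i+\min_j y_j$.

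The second step is the key identity. Applying the additivity above to $w^{\otimes l+1}$ and to $e^{\otimes \deg V}\otimes v^{\otimes l}$ — noting that the $G$-action on $\big(\C^{m\times m}\big)^{\otimes \deg V}$ is the tensor power of that on $\C^{m\times m}$, so the lemma applies to $e^{\otimes \deg V}$ as well — and using the definitions of $\mu$ and of $\|(\rho,[v])\|=\mu(\rho,[v:e^{\otimes\deg V}])$, one computes, for every arc $\rho$,
\[
\mu(\rho,P_l)=(l+1)\ord_0\rho.w-(\deg V)\ord_0\rho.e-l\,\ord_0\rho.v=(l+1)\,\mu(\rho,[v:w])-\|(\rho,[v])\|.
\]
Hence, for fixed $l>0$, the pair $P_l$ is numerically semistable if and only if $(l+1)\mu(\rho,[v:w])\geq\|(\rho,[v])\|$ for all arcs $\rho$, i.e. if and only if the numerical stability inequality at level $l$ holds. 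By Theorem \ref{numerical}, $P_l$ is numerically semistable if and only if it is semistable. Quantifying over $l>0$: $[v:w]$ is stable — some $l$ makes $P_l$ semistable — if and only if $[v:w]$ is numerically stable — some $l$ makes the inequality hold.

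Given Theorem \ref{numerical} as input, there is no serious obstacle: the content is the bookkeeping identity of the second step, and the only points requiring care are the sign convention in $\ord_0$ (orders of pole versus $t$-adic valuations) and checking that the $G$-actions on $V^{\otimes l}$, $W^{\otimes l+1}$ and on the matrix space through $e$ are all of the diagonal kind to which the first step applies. The genuinely hard input — an arc-based Hilbert--Mumford criterion for semistable pairs over a possibly non-reductive $G$ — is exactly what is packaged into Theorem \ref{numerical} and is used here as a black box.
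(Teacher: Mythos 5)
Your proof is correct and follows the intended route: the definitions are set up precisely so that stability and numerical stability at level $l$ are, respectively, semistability and numerical semistability of the single associated pair $P_l$, via the weight identity $\mu(\rho,P_l)=(l+1)\mu(\rho,[v:w])-\|(\rho,[v])\|$, so Theorem \ref{numerical} applied to $P_l$ and quantification over $l$ give the statement (and also the remark that the $l$'s can be taken to agree). The tensor-additivity of $\ord_0$ and the sign bookkeeping are exactly the points that need checking, and you handle them correctly.
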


Further, the $l$s involved in stability and numerical stability can be taken to agree.

\section{Main results}
\subsection{Constructibility of semistability of pairs} 

We consider a general projective variety $Y$ with an action of an algebraic group $G$, and let $\hat W\subset Y$ be a closed subvariety. For $y\in Y$, we consider the condition that $$\overline{G.y} \cap \hat W \neq \varnothing;$$ if this is the case, we say that $y$ \emph{degenerates} to $\hat W$ under $G$. Note that this is a condition on the orbit $G.y$ of $y$ rather than $y$ itself. 

\begin{proposition}\label{prop:main} The locus of points $y\in Y$ which do not degenerate to $\hat W$ is constructible in $Y$.
\end{proposition}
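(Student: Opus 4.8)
The plan is to show that the set of $y$ which \emph{do} degenerate to $\hat W$ is Zariski closed, by exhibiting it as the image of a projective morphism. First I would form the incidence-type variety
\[
Z = \overline{\{(g,y)\in G\times Y : g\cdot y \in \hat W\}},
\]
or rather a suitable compactification of $G$ in the first factor. The key difficulty is that $G$ is not proper, so the naive image of $\{(g,y): g\cdot y\in \hat W\}$ under the projection to $Y$ need not be closed — indeed its closure is exactly the degeneration locus, and making this precise is the whole point. To handle this I would choose a $G$-equivariant projective compactification $\bar G$ of $G$ (for instance, embedding $G\subset \GL(m)$ and taking the closure $\bar G\subset \pr(\C^{m\times m}\oplus \C)$, or any equivariant completion; the existence of such completions is standard), equipped with the left $G$-action. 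Then consider
\[
\widetilde Z = \overline{\{(g,y)\in G\times Y : g\cdot y\in \hat W\}} \subset \bar G\times Y,
\]
the closure taken inside the projective variety $\bar G\times Y$.

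Next I would argue that the image of $\widetilde Z$ under the second projection $p\colon \bar G\times Y\to Y$ is exactly the degeneration locus $D=\{y: \overline{G\cdot y}\cap \hat W\neq\varnothing\}$. Since $\bar G\times Y$ is projective and $Y$ is separated, $p$ is proper, so $p(\widetilde Z)$ is Zariski closed; this gives the Proposition once the set-theoretic identification $p(\widetilde Z)=D$ is established. One inclusion is easy: if $y$ lies in $p(\widetilde Z)$ coming from an honest point $(g,y)$ with $g\in G$, then $g\cdot y\in\hat W\subset\overline{G\cdot y}$; the points of $\widetilde Z$ on the boundary of $G\times Y$ are limits of such, and I would need to check that they still witness $\overline{G\cdot y}\cap\hat W\neq\varnothing$ — this follows because if $(g_i,y_i)\to(g_\infty,y_\infty)$ with $g_i\in G$, $g_i\cdot y_i\in\hat W$, and $y_i\to y_\infty$, then passing to the fibre $\widetilde Z_{y_\infty}=\widetilde Z\cap p^{-1}(y_\infty)$, which is a projective variety, and using that $\hat W$ is closed, one obtains a point of $\overline{G\cdot y_\infty}$ in $\hat W$. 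Conversely, if $\overline{G\cdot y}\cap\hat W\neq\varnothing$, pick $w$ in that intersection; then there is an arc $\rho\colon\Spec\C\llparenthesis t\rrparenthesis\to G$ (or just a curve) with $\rho(t)\cdot y\to w$ as $t\to 0$ by the curve-selection lemma / valuative criterion applied to $\overline{G\cdot y}$, and the limit $(\lim_{t\to 0}\rho(t),\, y)$ in $\bar G\times Y$ lies in $\widetilde Z$ with second coordinate $y$, so $y\in p(\widetilde Z)$.

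The main obstacle I anticipate is precisely the two-sided comparison $p(\widetilde Z)=D$: the subtlety is that taking closure in $\bar G\times Y$ could in principle introduce "spurious" boundary points whose image is not in $D$, or conversely might miss some $y\in D$ if the relevant degenerating family escaped to a boundary stratum of $\bar G$ in a pathological way. The curve-selection argument in the converse direction is the cleanest way to guarantee surjectivity onto $D$: every point of $\overline{G\cdot y}$ is reached by a one-parameter (in fact arc) family inside the orbit, and such a family has a limit in the compactification $\bar G$, so it genuinely produces a point of $\widetilde Z$ over $y$. For the forward direction, I would use that the closure $\widetilde Z$ is contained in the closed set $\{(g,y)\in\bar G\times Y : g\in\bar G,\ y\in Y,\ \text{$(g,y)$ is a limit of points with $g_i\cdot y_i\in\hat W$}\}$, and that membership of a point of $\hat W$ in $\overline{G\cdot y}$ is a closed condition on $y$ once one has produced the witnessing limit; here the properness of $\bar G\to\Spec\C$ does the work of ensuring the witness exists. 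Once $p(\widetilde Z)=D$ is in hand, closedness of $D$ — hence Zariski openness of its complement — is immediate from properness of $p$, completing the proof.
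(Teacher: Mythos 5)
Your overall strategy (realise the degeneration locus as the image of a closed subset of a projective variety under a proper projection) is the same as the paper's, but the incidence variety you close up is the wrong one, and this is a fatal gap rather than a technicality. The set $S=\{(g,y)\in G\times Y: g\cdot y\in\hat W\}$ only records points $y$ whose \emph{actual orbit} meets $\hat W$; in the situations this proposition is designed for, orbits typically only \emph{limit} to $\hat W$ without ever meeting it, so $S$ (and hence its closure $\widetilde Z$ in $\bar G\times Y$) can be far too small. Concretely, take $G=\C^*$ acting on $Y=\pr^1$ by $t\cdot[x:y]=[tx:y]$ and $\hat W=\{[0:1]\}$: then $S=G\times\{[0:1]\}$, so $p(\widetilde Z)=\{[0:1]\}$, whereas the degeneration locus is all of $\pr^1\setminus\{[1:0]\}$. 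The error in your argument for $D\subset p(\widetilde Z)$ is precisely at the step where you claim $(\lim_{t\to 0}\rho(t),\,y)\in\widetilde Z$: the points $(\rho(t),y)$ satisfy only $\rho(t)\cdot y\to w\in\hat W$, not $\rho(t)\cdot y\in\hat W$, so they do not lie in $S$ and their limit has no reason to lie in $\overline S$. (Your forward inclusion $p(\widetilde Z)\subset D$ is also shaky for boundary points, since $g_i\cdot y_i\in\hat W$ with $y_i\to y_\infty$ does not by itself produce a point of $\overline{G\cdot y_\infty}$ in $\hat W$, but the reverse inclusion is where the construction genuinely breaks.)

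The repair is to interchange the order of the two operations ``take closure'' and ``impose membership of $\hat W$'': this is what the paper does. One first forms the graph of the full action, projects it to $Y\times Y$ to get $\{(y_1,y_2): y_2\in G\cdot y_1\}$, and takes its Zariski closure $Z\subset Y\times Y$, so that the fibre of $Z$ over $y_1$ captures (at least) the whole orbit closure $\overline{G\cdot y_1}$ and not merely the orbit. Only then does one intersect with $Y\times\hat W$ and push forward to $Y$, where properness of the projection gives closedness of the image. In your version you intersect with $\hat W$ first and close up afterwards, which destroys exactly the points you need. I would also caution that, once you set up the correct incidence variety, the delicate point becomes the opposite containment --- ruling out that the closure of $\{(y_1,y_2):y_2\in G\cdot y_1\}$ contains pairs $(y_1,y_2)$ with $y_2\notin\overline{G\cdot y_1}$ arising as limits from nearby orbits (orbit closures can jump in families) --- so any rewrite should address that inclusion explicitly rather than the one your arc argument was aimed at.
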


That is, the locus of points not degenerating to $\hat W$ is a finite union of locally closed subsets of $Y$, where we recall a locally closed subset is an open subset of a closed subset of $Y$.

\begin{proof}

By arguing on each irreducible component, we may assume $Y$ is irreducible.  Let $\overline G$ be a projective compactification of $G$, and consider $S\subset Y$ a locally closed subvariety. Consider the closure $\bar \Gamma_S$ of the graph $\Gamma_S$ of the morphism $$G\times S \to Y, \qquad (g,s)\to g(s)$$ induced by the $G$-action,  so that  $\bar \Gamma_S \subset \bar G \times S \times Y$. For $s\in S$, denote by $\bar\Gamma_{S,s}$ and $\Gamma_{S,s}$ the fibres over $s\in S$ defined via the natural projections. Thus the fibre $\Gamma_{S,s}$ is the graph of the morphism $G\to Y$ defined by $g \to g(s)$.

Our proof involves a stratification of $Y$. We first describe the open stratum, for which we involve $\Gamma_Y$ and $\bar\Gamma_Y$. Firstly, flattening stratification applied to the morphism $\overline{\Gamma}_Y \to Y$ (the projection onto the second factor of $\bar G\times Y \times Y$, so that the target $Y$ of $\overline{\Gamma}_Y \to Y$ plays the role of $S$) produces an open set $\tilde Y^0 \subset Y$ for which the restricted morphism $\overline{\Gamma}_{Y}|_{\tilde Y^0} \to \tilde Y^0$ is flat. Note that ${\Gamma}_{Y}|_{\tilde Y^0} = {\Gamma}_{\tilde Y^0}$ by definition  and hence $\overline{\Gamma}_{Y}|_{\tilde Y^0} = \overline{\Gamma}_{\tilde Y^0}$.

 We claim there is an open subvariety $Y^0\subset \tilde Y^0$ such that for all $y\in Y^0$, the fibre $\Gamma_{Y,y}$ is dense in $\overline{\Gamma}_{Y,y}$.  To prove the claim, let $Y^{0}$ denote the locus of $y \in \tilde Y^0$ for which $\Gamma_{Y,y}$ is dense in $\overline{\Gamma}_{Y,y}$, so that we aim to show $Y^{0}\subset \tilde Y^0$ is open.  Note that $\Gamma_{\tilde Y^0}\cong G\times \tilde Y^0$, being the graph of a morphism from $G\times \tilde Y^0$. Each connected component of $\bar \Gamma_{\tilde Y^0}$ thus has dimension $\dim \tilde Y^0+\dim G$ (where we note that $G$ may not be connected, hence the need to restrict to a connected component). Since $\bar \Gamma_{\tilde Y^0} \to \tilde Y^0$ is flat, it follows that each fibre is pure dimensional, in the sense that each irreducible component of $\bar \Gamma_{Y,y}$ has the same dimension, which by flatness must equal to $\dim G$.  By pure dimensionality,  $\Gamma_{Y,y}$ is dense in $\bar\Gamma_{Y,y}$ if and only if the fibre of the complement $\bar\Gamma_{Y,y}\setminus\Gamma_{Y,y}$ contains no irreducible component of  $\bar\Gamma_{Y,y}$, which is equivalent to $\dim \bar\Gamma_{Y,y}\setminus\Gamma_{Y,y}< \dim G$. Since $\bar\Gamma_{\tilde Y_0}\setminus \Gamma_{\tilde Y_0}$ is a closed subvariety of $\bar\Gamma_{\tilde Y_0}$ such that $$\bar\Gamma_{Y,y}\setminus\Gamma_{Y,y} = (\bar\Gamma_{\tilde Y_0}\setminus \Gamma_{\tilde Y_0})_y,$$ we see that the locus  in $Y^0$ for which $$\dim \bar\Gamma_{Y,y}\setminus\Gamma_{Y,y} <\dim G$$ is open, proving the claim. 

We next prove that, within $Y^0$, the locus of points which degenerate to $\hat W$ is closed. Indeed, set $$B_{Y_0} = \overline{\Gamma}_{Y^0} \cap  \bar G \times Y^0 \times \hat W,$$ which is a closed subvariety of $\overline{\Gamma}_{Y^0}$. Denoting by  $\pi_{Y_0}: \bar G \times Y^0 \times \hat W \to Y^0$ the projection, by fibrewise density, $y\in \pi_{Y_0}(B_{Y_0})$ if and only if $\overline{G.y} \cap \hat W \neq \varnothing$. Since $B_{Y_0}$ is closed,  its image  $\pi_{Y_0}(B_{Y_0})$  is also closed, as $\pi_{Y_0}$ is a projective morphism. We have thus proven that the locus of points in $Y_0$ which degenerate to $\hat W$ is closed in $Y_0$.

We next consider $Y \setminus Y_0$. This admits open sets $Y^1\subset \tilde Y^1 \subset Y \setminus Y^0$ satisfying the same conditions as $Y^0$, where these conditions are defined via  $\Gamma_{\tilde Y^1}\to \tilde Y^1$ and $\bar \Gamma_{\tilde Y^1}\to \tilde Y^1$. By Noetherianity, this process terminates in finitely many steps, producing a finite stratification $Y = \cup_i Y^i$, such that for all $Y^i$, the locus of points which do not degenerate to $\hat W$ is open in $Y^i$, hence is locally closed in $Y$. As such, the locus of points in $Y$ which do not degenerate to $\hat W$ is a finite union of locally closed subsets, hence is constructible.
\end{proof}

\begin{example}
It is not true that degenerating to $\hat W$ is a Zariski closed property, as one sees by taking $Y = \pr^1$, $W=[0,1]$ and endowing $Y$ with the $\C^*$-action $[x:y] \to [tx:y]$. The locus of points degenerating to $\hat W$ is then $\pr^1 \setminus [1:0]$.
\end{example}

We apply this to semistability of pairs, so consider pairs lying in $\pr(V\oplus W)$ with $G$ acting on $V$ and $W$ as in Section \ref{sec:pairs}.

\begin{corollary}\label{cor:zariski-pairs}
The locus of semistable pairs is constructible in $\pr(V\oplus W)$.
\end{corollary}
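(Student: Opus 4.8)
The plan is to deduce Corollary~\ref{cor:zariski-pairs} directly from Proposition~\ref{prop:main} by choosing the right ambient variety $Y$ and closed subvariety $\hat W$. The natural choice is $Y = \pr(V\oplus W)$, which is projective, and on which $G$ acts linearly (hence algebraically) through its action on $V$ and $W$. Inside $Y$ we take $\hat W = \pr(0\oplus W)$, the linear subspace cut out by the vanishing of all coordinates coming from $V$; this is visibly a closed subvariety of $\pr(V\oplus W)$. With these choices in place, a point $[v:w]\in Y$ satisfies $\overline{G.[v:w]}\cap \pr(0\oplus W)\neq\varnothing$ precisely when $[v:w]$ \emph{degenerates to $\hat W$} in the sense of Proposition~\ref{prop:main}.

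The second step is simply to match definitions. By Paul's definition recalled in Section~\ref{sec:pairs}, a pair $[v:w]$ (with $v\neq 0$) is semistable exactly when $\overline{G.[v:w]}\cap \pr(0\oplus W) = \varnothing$, i.e.\ exactly when $[v:w]$ does \emph{not} degenerate to $\hat W$. Proposition~\ref{prop:main} tells us that the locus of points of $Y$ not degenerating to $\hat W$ is Zariski open in $Y$. Intersecting this open set with the (Zariski open) locus $\{[v:w] : v\neq 0\} = Y\setminus \pr(0\oplus W)$ on which the notion of semistable pair is defined, we conclude that the semistable locus is Zariski open in $\pr(V\oplus W)$, as claimed.

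There is essentially no obstacle here: the content is entirely in Proposition~\ref{prop:main}, and the corollary is a formal specialisation. The only points that require a moment's care are (i) checking that the $G$-action on $\pr(V\oplus W)$ induced by the linear action on $V\oplus W$ is a genuine morphism of varieties, so that Proposition~\ref{prop:main} applies — but this is standard, since $G$ acts linearly on the vector space $V\oplus W$ and this descends to an algebraic action on the projectivisation away from the (empty) base locus; and (ii) confirming that $\pr(0\oplus W)$ is closed in $\pr(V\oplus W)$ and $G$-invariant, which is immediate as it is the zero locus of the linear forms corresponding to a basis of $V^{\vee}$ and $W$ is a subrepresentation. Thus the proof is a one-paragraph invocation of Proposition~\ref{prop:main}.

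\begin{proof}
Apply Proposition~\ref{prop:main} with $Y = \pr(V\oplus W)$, equipped with the $G$-action induced by the linear $G$-action on $V\oplus W$, and with $\hat W = \pr(0\oplus W)$, which is a closed (indeed linear) subvariety of $Y$. By definition, a pair $[v:w]\in\pr(V\oplus W)$ is semistable if and only if $\overline{G.[v:w]}\cap \pr(0\oplus W) = \varnothing$, that is, if and only if $[v:w]$ does not degenerate to $\hat W$ under $G$. By Proposition~\ref{prop:main}, the locus of such points is Zariski open in $\pr(V\oplus W)$.
\end{proof}
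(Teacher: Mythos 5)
Your proof is correct and is essentially identical to the paper's own argument: both apply Proposition \ref{prop:main} with $Y=\pr(V\oplus W)$ and $\hat W=\pr(0\oplus W)$ and observe that non-semistability is exactly degeneration to $\hat W$. The extra remarks about the $G$-action being algebraic and the locus $v\neq 0$ are fine but not needed beyond what the paper already takes as immediate.
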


\begin{proof} 
This is immediate from Proposition \ref{prop:main}, by taking $Y$ to be $\pr(V\oplus W)$ and $\hat W$ to be $\pr(0\oplus W)$. The condition that $[v:w]$ is not semistable is precisely the condition that $[v:w]$ degenerates to $\hat W=\pr(0\oplus W)$ under $G$, and the complement of this  locus is constructible by Proposition \ref{prop:main}.
\end{proof}

We obtain the same result for stability of pairs, following the notation of Section \ref{sec:pairs}, essentially since this is defined to be semistability of an associated pair.

\begin{corollary}\label{cor:zariski-pairs-stable}
The locus of stable pairs is a countable union of constructible sets. 
\end{corollary}

\begin{proof}

Defining $[v:w]$ to be \emph{stable at level} $l$ if $\left[e^{\otimes \deg V} \otimes v^{\otimes l}: w^{\otimes l+1}\right]$ is semistable, then for $[v:w]$ to be stable means that it is stable at level $l$ for some $l>0$, and for each such $l$ the locus of pairs which are stable at level $l$ is constructible by Corollary \ref{cor:zariski-pairs}.  Thus the locus of stable pairs is a countable union of constructible sets.
\end{proof}

We note that the locus of points which are stable at level $l$ is increasing as $l\to \infty$: stability of level $l$ implies stability at level $l+l'$ for any $l'>0$, as follows from the numerical criterion and nonnegativity of the norm.

\subsection{Semistability of pairs as semistability without ampleness}\label{non-ample} Our application of Corollary \ref{cor:zariski-pairs-stable} will pass through the following construction. Let $Y$ be a projective scheme, and let $L$ be a line bundle, which we do not assume is ample. We let $G$ be a linear algebraic group acting on $(Y,L)$, in such a way that the $G$-action lifts to some auxiliary ample line bundle on $Y$. We may thus write additively $$L = A - B,$$ where $A,B$ are $G$-linearised very ample line bundles and this is an equality of $G$-linearised line bundles. Setting $V = H^0(Y,A)$ and $W=H^0(Y,B)$, this produces a pair of embeddings via a choice of bases $$Y \hookrightarrow \pr(V), \qquad Y \hookrightarrow \pr(W).$$ A point $y \in Y$  thus induces a  pair of points $[v]\in\pr(V)$ and $[w]\in\pr(W)$, and we may choose lifts $v\in V, w\in W$, giving $[v:w] \in \pr(V\oplus W)$. Notice that, in contrast to an arbitrary pair, $w\neq 0$.

The point $[v:w]$ is not the only point in $\pr(V\oplus W)$ corresponding to $y$, because of the ambiguity in choosing the lifts $v$ and $w$ of $[v]$ and $[w]$ respectively. The space of lifts correspond to the orbit of the natural $\C^*$-action on $\pr(V\oplus W)$, scaling $[v:w]\to [tv:w]$: for $t\in \C^*$.

\begin{lemma}\label{scaling}
For $t\in \C^*$, a pair $[v:w]$ is semistable if and only if $[tv:w]$ is semistable.
\end{lemma}

\begin{proof}
This is a simple consequence of the numerical criterion for semistability of pairs, namely Theorem \ref{numerical}.
\end{proof}

Consider the morphism \begin{equation}\label{fibre-bundle}\phi: \pr(V\oplus W)\setminus(\pr(V\oplus 0) \cup \pr(0\oplus W)) \to \pr(V)\times \pr(W)\end{equation} defined by $\phi([v:w]) = [v:w]$. Its fibre over $([v],[w])\in \pr(V)\oplus \pr(W)$ is the $\C^*$-orbit $\C^*. [v:w]$. Over a standard affine chart $U_i\times U_j$ of $ \pr(V)\times \pr(W)$, the morphism reduces to the trivial projection $\C^{\dim V-1} \times \C^{\dim W-1}\times\C^* \to \C^{\dim V-1} \times \C^{\dim W-1}$, so the morphism is a (Zariski) locally trivial $\C^*$-bundle, and one may thus view $\phi$ as a geometric quotient under the $\C^*$-action.

Our aim is to compare semistability, as defined for the $G$-action on $(X,H)$, to that in $\pr(V\oplus W)$. To make sense of this, as $H$ may not be ample, we give the following definition.

\begin{definition}\label{H-semistable}
We say that $y\in Y$ is \emph{$H$-semistable} if for all arcs $\rho$ in $G$ we have $\mu(\rho,y)\geq 0$, where $$\mu(\rho,y) = \mu(\rho,w) - \mu(\rho,v).$$
\end{definition}

That is, the weight with respect to $H$ is the difference of the weights with respect to $A$ and $B$, where as $A$ and $B$ are ample, these agree with the quantities defined originally defined by Donaldson. 

\begin{lemma}\label{additivity}
The quantity $\mu(\rho,y)$ is independent of choice of $A, B$ for which $H=A-B$.
\end{lemma} 

\begin{proof}

We  give a more intrinsic approach to the weight, which is similar to  \cite[Lemma 3.7]{DR}.  Choosing a point $y\in Y$, the arc $\rho$ induces morphisms $\Spec\C{\llbracket t\rrbracket}\to Y$, by the valuative criterion for properness, and we pull back the line bundle $A$ to $\Spec\C{\llbracket t\rrbracket}$. Through the identification of $(\Spec\C{\llbracket t\rrbracket}, A)$ over $\Spec\C\llparenthesis t\rrparenthesis$  with $\rho.[v] \cong \Spec\C\llparenthesis t\rrparenthesis$ induced by the arc, a section $s_A\in A_y$ induces a section over $\rho.[v]$ and hence  a rational section $\tilde s_A$ of $A$. If we now assume that $Y$ is embedded into $\pr(V)$ for $V=H^0(Y,A)$ and $y=[v]$, then the weight  satisfies $$\ord_0\rho.v = \inf\{m\in \Z: t^m\tilde s\textrm{ is regular}\}.$$ 

Given now linearised very ample line bundles $A_1, A_2$, we may choose the tensor product section $ s_{A_1}\otimes s_{A_2}$, which induces the rational section $\tilde  s_{A_1}\otimes \tilde s_{A_2}$ of $A_1\otimes A_2$. As such, the weight is additive under tensor products, which implies the result.\end{proof}

We may thus view $H$-semistability as semistability of an associated pair. The following is thus a simple consequence of the numerical criterion.

\begin{lemma}\label{semistability-vs-pairs}
The point  $y\in Y$ is  $H$-semistable if and only if $[v:w]$ is semistable.
\end{lemma}

We next prove a version of constructibility for $H$-semistability, which does not require ampleness of $H$.

\begin{corollary}\label{H-ss-zar}
The $H$-semistable locus in $Y$ is constructible. 
\end{corollary}

\begin{proof}
Corollary \ref{cor:zariski-pairs} implies that the locus of semistable pairs is constructible in $\pr(V\oplus W)$; denote this subset as $\pr(V\oplus W)^{\ses}$. Consider also the  morphism $\phi: \pr(V\oplus W)\setminus(\pr(V\oplus 0) \cup \pr(0\oplus W)) \to \pr(V)\times \pr(W)$ defined in Equation \eqref{fibre-bundle}. As $\phi$ is a geometric quotient and $\pr(V\oplus W)^{\ses}$ is $\C^*$-invariant by Lemma \ref{scaling}, there is a well-defined constructible locus $(\pr(V)\times \pr(W))^{\ses}$ defined by $$(\pr(V)\times \pr(W))^{\ses} = \phi((\pr(V\oplus W)\setminus(\pr(V\oplus 0) \cup \pr(0\oplus W)))^{\ses}),$$ where constructibility follows from Chevalley's theorem on constructible sets.

By Lemma \ref{semistability-vs-pairs}, $y\in Y$ is $H$-semistable if and only if $(y,y)\in (Y\times Y)\cap (\pr(V)\times \pr(W))^{\ses}$. Said another way, the $H$-semistable locus in $Y$ is the intersection $\Delta \cap (\pr(V)\times \pr(W))^{\ses}$, where $\Delta$ denotes the diagonal, through the diagonal embedding of $Y$ into $\pr(V)\times \pr(W)$. It follows that the $H$-semistable locus  is constructible in Y. \end{proof}

The same ideas apply to prove that the $H$-stable locus in $Y$ is a countable union of constructible sets, defining $H$-stability again through the numerical criterion analogously to Definitions \ref{numerical-stab-def} and \ref{H-semistable}. We sketch the details. Recall that stability of a pair $[v:w]$ corresponds to the existence of an $l>0$ such that   $$\left[e^{\otimes \deg V} \otimes v^{\otimes l}: w^{\otimes l+1}\right]\in \pr\left(\left(\C^{m\otimes m}\right)^{\otimes d} \otimes V^{\otimes l} \oplus W^{\otimes l+1}\right)$$ is semistable. Fixing such an $l$, writing $H=A-B$ as before, we obtain a pair of embeddings $Y\hookrightarrow \pr((\C^{m\otimes m})^{\otimes d} \otimes V)$ defined by $y\to [e^{\otimes d}\otimes v^{\otimes l}]$. If one chooses a basis of $V = H^0(Y,A)$, then this morphism corresponds to the composition of the embeddings  $Y\hookrightarrow\pr(e^{\otimes d}\otimes V^{\otimes l})\hookrightarrow \pr((\C^{m\otimes m})^{\otimes d} \otimes V^{\otimes l})$. We obtain also a second embedding  $Y\hookrightarrow \pr(W^{\otimes{l+1}})$, and arguing just as before, the locus of points for which $y\in Y$ satisfies the numerical criterion $$\mu(\rho,y) \geq (l+1)^{-1}\|(\rho,y)\|$$ for all arcs $\rho$ is constructible. Taking the union over all $l\geq 0$ proves the following.

\begin{corollary}\label{H-s-zar}
The $H$-stable locus in $Y$ is a countable union of constructible sets. 
\end{corollary}

\subsection{Applications to uniform arc K-stability}

We fix an exponent $r$ and prove that arc K-semistability at exponent $r$ is a constructible property. We thus embed $$X \hookrightarrow \pr(H^0(X,rL)) = \pr^{N_r},$$ giving a point $[X] \in \Hilb_{h(k)}\left(\pr^{N_r}\right)$ where $h(k)$ is the Hilbert polynomial of $(X,rL)$. The construction of the Hilbert scheme endows it with a sequence of line bundles $\L_{\Hilb, j}$ arising from its construction as a subscheme of a $j$-dependent projective space for $j \gg 0$. In more detail, the Hilbert scheme is constructed as a subscheme of a $j$-dependent Grassmannian, which we embed into projective space via the Pl\"ucker embedding. We restrict the resulting $\scO(1)$ line bundle from these projective spaces to $\Hilb_{h(k)}\left(\pr^{N_r}\right)$ to obtain the very ample line bundles $\L_{\Hilb, j}$. 

The Hilbert scheme also admits a universal family $$\pi_{\U}: (\U,\L_{\U}) \to \Hilb_{h(k)}\left(\pr^{N_r}\right)$$ with $\L_{\U}$ relatively ample, and so we may apply the Knudsen--Mumford theory \cite[Theorem 4]{KM} to the pushforwards $\pi_{\U*}(k\L_{\U})$ to obtain a sequence of line bundles $\H_m$ on $\Hilb_{h(k)}\left(\pr^{N_r}\right)$ arising from an expansion of the line bundles $\det\pi_{\U*}(k\L_{\U})$. 

We incorporate the $\GL(N_r+1)$-action on $H^0(X,rL)$, which induces an action on  $\Hilb_{h(k)}\left(\pr^{N_r}\right)$ lifting to  $\L_{\Hilb, j}$ by construction. Similarly the action lifts to $\L_{\U}$ and hence each $\H_m$. We may thus write the difference $$\L_{\CM} := \frac{\H_0a_1 - \H_1a_0}{a_0^2} = L_0 - L_1$$ as a difference of $\GL(N_r+1)$-linearised very ample line bundles (perhaps after scaling to clear denominators), using that the action lifts to the very ample line bundles $\L_{\Hilb, j}$. We note here that $\L_{\CM}$ frequently fails to be ample (and can even be negative on curves) \cite{FR}, necessitating the theory of stability of pairs.

We thus have a pair of embeddings $$\Hilb_{h(k)}\left(\pr^{N_r}\right) \hookrightarrow \pr\left(H^0\left(\Hilb_{h(k)}\left(\pr^{N_r}), L_m\right)\right)\right)$$ for $m=0,1$ with $\GL(N_r+1)$-actions on $$V = H^0\left(\Hilb_{h(k)}\left(\pr^{N_r}\right), L_0\right), \qquad W =  H^0\left(\Hilb_{h(k)}\left(\pr^{N_r}\right), L_1\right).$$ We are thus precisely in the situation of Section \ref{non-ample}, meaning that $\L_{\CM}$-stability of $[X]\in \Hilb_{h(k)}(\pr^{N_r})$ corresponds precisely to stability of the associated pair $[v_X:v_w] \in \pr(V\oplus W)$ for any lifts $v_X\in V, w_X\in W$ by Lemma \ref{semistability-vs-pairs}.

We next fix a flat family $\pi_{\X}: (\X,\L) \to B$ of polarised varieties, so $\L$ is relatively ample. We assume $r\L$ is relatively very ample and embeds $\X \to \pr(\pi_{\X*}(r\L))$ into a projective bundle, which is automatic for $r \gg 0$. For each $b\in B$ we may thus ask whether $(\X_b,\L_b)$ is arc K-semistable at exponent $r$. 

\begin{corollary}\label{fixed-exponent}

The arc K-semistable locus is a countable intersection of constructible sets. The uniformly arc K-stable locus is a countable union of a countable intersection of constructible sets.

\end{corollary}

\begin{proof}

We cover $B$ by finitely many affine subvarieties trivialising the bundle $\pi_{\X*}(r\L)$. Fix one such $U\subset B$, embedding $\X_U \hookrightarrow U \times \pr^{N_r}$. From the discussion preceding the statement of the present corollary, we obtain an $\L_{\CM}$-stability problem for each $[\X_b]\in \Hilb_{h(k)}(\pr^{N_r})$, and by Corollaries  \ref{H-ss-zar} and \ref{H-s-zar}, the $\L_{\CM}$-semistable and $\L_{\CM}$-stable loci are constructible  and countable unions of constructible sets in $U$ respectively. 

To relate this to arc K-semistability and uniform arc K-stability at exponent $r$, we appeal to the numerical criteria defining  $\L_{\CM}$-semistability and stability. For each arc $\rho$ in $\GL(N_r+1)$, we thus wish to compute $\mu(\rho,[X])$. The arc $\rho$ produces a $\C\llparenthesis t\rrparenthesis$-point of $\Hilb_{h(k)}\left(\pr^{N_r}\right)$, hence a unique $\C{\llbracket t\rrbracket}$-point by the valuative criteria for properness and separatedness applied to the Hilbert scheme. The weight $\mu(\rho,[v_X,w_X]) $ is then the degree of $\L_{\CM}$ restricted to this subscheme by \cite[Lemma 3.7]{DR} (whose proof is similar to Lemma \ref{additivity}), where again the degree is meant in the  same sense as Section \ref{sec:arcs-prelims} using the natural trivialisation of the arc.   The arc $\rho$ also induces a model $(\X,\L)$ through from Remark \ref{rmk:arcs} (namely, by restricting the universal family over the Hilbert scheme) in such a way that $$\DF(\X,\L) = \mu(\rho,[v_X,w_X]),$$ since $\DF(\X,\L)$ is also defined through degrees of line bundles associated to the Knudsen--Mumford expansion, and where we use functoriality of this expansion. 

To understand the norm, we argue in a different manner and appeal to \cite[Corollary 4.7 (iii)]{DR}; while \cite{DR} assumes normality of $X$, the proof applies in general (or can be reduced to the normal case by normalising).

Thus $\L_{\CM}$-semistability and stability precisely correspond to arc K-semistability and uniform arc K-stability at exponent $r$. It follows that arc K-semistability is a countable intersection of constructible sets. By definition, uniform arc K-stability asks for the existence of an $l>0$ such that for all models $(\X,\L)$ we have $$\DF(\X,\L)\geq \frac{1}{l+1}\|(\X,\L)\|.$$ For each fixed $l$, the corresponding locus is a countable intersection of constructible sets, and so the uniformly arc K-stable locus (being a union over all such $l$) is a countable union of countable intersections of constructible sets. Since this is true for each $U$, we obtain the statement. \end{proof}

We may now apply our results to cscK metrics, proving Theorem \ref{intromainthm}. Consider a flat family $(\X,\L)\to B$ of smooth polarised varieties such that each fibre has finite automorphism group. 

\begin{corollary}\label{cor:final}
The cscK locus $B_{\cscK}$ is very general in $B$. \end{corollary}

\begin{proof}
By Trusiani \cite{AT}, a fibre $(\X_b,\L_b)$ is cscK if and only if it is uniformly K-stable. Since the cscK condition implies uniform arc K-stability \cite{DR}, which in turn implies uniform K-stability, it follows that uniform arc K-stability is equivalent to the existence of a cscK metric. Now, by \cite{FS} the cscK locus is open in the analytic topology, while by Corollary \ref{fixed-exponent} the uniformly arc K-stable locus is a countable union of countable intersections of constructible sets. It follows from Proposition \ref{topology} that a set which is both open in the analytic topology and which is a countable union of countable intersections of constructible sets is in fact very general, and so the proof is complete. \end{proof}

We used the following result, which is likely well known (see \cite[XI Corollary 2.3]{GR} for the closely analogous statement that a constructible set which is analytically open is Zariski open).

\begin{proposition}\label{topology}
Let $B^{\circ}\subset B$ be open in the analytic topology, and equal to a countable union of countable intersections of constructible sets. Then $B^{\circ}$ is very general. 
\end{proposition}

We recall we allow $B^{\circ}$ to be empty. The main point is that an analytically open set cannot be contained in a countable union of Zariski closed sets, as the latter has zero measure. This implies the claim when $\dim B=1$ in a straightforward manner, and in higher dimensions we show that $B\setminus B^{\circ}$ must be contained in a countable union of Zariski closed subsets; an induction on dimension strategy then proves that $B\setminus B^{\circ}$ is actually equal to a countable union of Zariski closed subsets. We may assume $B$ is irreducible and $B^{\circ}$ is nonempty, as in the former case we may restrict to an irreducible component, while in the latter the result is trivial.

In order to prove Proposition \ref{topology}, we first prove the following weaker result, for which we may again assume $B$ is irreducible.

\begin{lemma}\label{topology2}
The complement $B\setminus B^{\circ}$ is contained in a countable union of Zariski closed proper subsets of $B$.
\end{lemma}

Here we mean proper in the set-theoretic sense, namely not equal to $B$ itself.

\begin{proof}

Write $B^{\circ} = \cup_{i}(\cap_{j} B_{i,j})$, where the $B_{i,j}$ are constructible. Let $C_i =\cap_{j\geq 1} B_{i,j}$, so that $B^{\circ}  = \cup_i C_i$. 

We first claim that either $C_i$ is contained in a proper Zariski closed subset of $B$, or $B \setminus C_i$ is contained in a countable union of Zariski closed subsets of $B$. To prove the claim, note that either every $B_{i,j}$ is dense in the Zariski topology of $B$, or some such $B_{i,j}$ is not Zariski dense. If $B_{i,j}$ is not Zariski dense, then $C_i \subset B_{i,j}\subset \overline{B_{i,j}}$ (this denoting the Zariski closure), so $C_i$ is contained in a proper Zariski closed subset of $X$. Otherwise, if every $B_{i,j}$ is Zariski dense, each $B_{i,j}$ contains a (nonempty) Zariski open set $U_{i,j}$, being constructible. Then $C_i  =\cap_{j\geq 1} B_{i,j} \supset \cap_j U_{i,j}$ contains a countable intersection of Zariski open subsets. Thus $B\setminus C_i \subset \cup_j (B\setminus U_{i,j})$ is contained in a countable union of Zariski closed subsets of $B$.

Since the union $B^{\circ} =  \cup_{i\geq 1}C_i$ is open in the analytic topology, it follows that  $B \setminus C_i$ is contained in a countable union of Zariski closed subsets of $B$ for some $i$. Indeed, otherwise each $C_i$ would be contained in a Zariski closed subset of $B$ by the claim just proven, and hence would have measure zero (after fixing a measure induced by the volume form associated to a K\"ahler metric, for example). As such, $B^{\circ}$ would have measure zero, contradicting openness in the analytic topology.   Thus $B\setminus B^{\circ} \subset B \setminus C_i$ is contained in a countable union of Zariski closed subsets of $B$, proving the result. \end{proof}

We may now prove Proposition \ref{topology}.

\begin{proof}[Proof of Proposition \ref{topology}] We prove the result by induction on $\dim B$, the case when $\dim B=0$ being trivial. By Lemma \ref{topology2} we may choose Zariski closed subsets $V_i$ of $B$ such that  $(B\setminus B^{\circ}) \subset \cup_i V_i$, where we may in addition assume the $V_i$ are irreducible. 

Fixing a single $V_i$, the set $(B^{\circ}\cap V_i)\subset V_i$ is analytically open, and is given by a countable union of countable intersections of constructible sets in $V_i$. As such, by induction $V_i \setminus (B^{\circ}\cap V_i)$ is a countable union of proper Zariski closed subsets of $V_i$, or $B^{\circ}\cap V_i = \varnothing$. In either case, since Zariski closed subsets of $V_i$ are Zariski closed subsets of $B$, it follows that $V_i\setminus(B^{\circ} \cap V_i)$ is equal to a countable union of proper Zariski closed subsets of $B$. Since $(B\setminus B^{\circ}) \subset \cup_i V_i$, the result follows. \end{proof}

%\printbibliography

\vspace{4mm}
\end{document}